\numberwithin{equation}{section}
\newtheorem{thm}{Theorem}[section]
\newtheorem{prop}[thm]{Proposition}
\theoremstyle{definition}
\newtheorem{defn}{Definition}[section]
\newtheorem{rem}{Remark}[section]
\newcommand{\X}{\mathfrak{X}}
\newcommand{\s}{\mathfrak{S}}
\newcommand{\g}{\mathfrak{g}}
\newcommand{\W}{\mathcal{W}}
\newcommand{\R}{\mathbb{R}}
\newcommand{\norm}[1]{\left\Vert#1\right\Vert ^2}
\newcommand{\nJ}{\norm{\nabla J}}
\newcommand{\nN}{\norm{N}}
\newcommand{\ad}{{\rm ad}}
\newcommand{\tr}{{\rm tr}}
\def\co{\colon\thinspace}
\newcommand{\thmref}[1]{Theorem~\ref{#1}}
\newcommand{\tabref}[1]{Table~\ref{#1}}
\begin{document}

\title{On Lie groups as quasi-K\"ahler manifolds \\ with Killing Norden metric}

\author{Mancho Manev, Dimitar Mekerov}

\address{University of Plovdiv, Faculty of Mathematics and
Informatics\\
236 Bulgaria Blvd., 4003 Plovdiv, Bulgaria\\
\url{http://www.fmi-plovdiv.org/manev}}
\email{mmanev@uni-plovdiv.bg, mircho@uni-plovdiv.bg}

\begin{abstract}
A 6-parametric family of 6--dimensional quasi-K\"ahler manifolds
with Norden metric is constructed on a Lie group. This family is
characterized geometrically.
\end{abstract}

\subjclass[2000]{53C15, 53C50 (Primary); 32Q60, 53C55
(Second\-ary)}

\keywords{almost complex manifold, Norden metric, quasi-K\"ahler
manifold, indefinite metric, non-integrable almost complex
structure, Lie group.}

\date{2006/11/11}

%%% ----------------------------------------------------------------------
\maketitle
%%% ----------------------------------------------------------------------

\setcounter{tocdepth}{2} \tableofcontents

%%%%%%%%%%%%%%%%%%%%%%%%%%%%%%%%%%%%%%%%%%%%%%%%%%%%%%%%%%%%%%%%%%%%%%%%%%%0
\section*{Introduction}

It is a fundamental fact that on an almost complex manifold with
Hermitian metric (almost Hermitian manifold), the action of the
almost complex structure on the tangent space at each point of the
manifold is isometry. There is another kind of metric, called a
Norden metric or a $B$-metric on an almost complex manifold, such
that the action of the almost complex structure is anti-isometry
with respect to the metric. Such a manifold is called an almost
complex manifold with Norden metric \cite{GaBo} or with $B$-metric
\cite{GaGrMi}. See also \cite{GrMeDj} for generalized
$B$-manifolds. It is known \cite{GaBo} that these manifolds are
classified into eight classes.

The purpose of the present paper is to exhibit, by construction,
almost complex structures with Norden metric on Lie groups as
6-manifolds, which are of a certain class, called the class of the
quasi-K\"ahler manifolds with Norden metrics.  This 6-parametric
family of manifolds is characterized geometrically.

The case of the lower dimension 4 is considered in \cite{GrMaMe-2}
and \cite{MeMaGr-3}.

%%%%%%%%%%%%%%%%%%%%%%%%%%%%%%%%%%%%%%%%%%%%%%%%%%%%%%%%%%%%%%%%%%%%%%%%%%%%%%%%%1

\section{Almost Complex Manifolds with Norden Metric}\label{sec_1}

\subsection{Preliminaries}\label{sec-prelim}

Let $(M,J,g)$ be a $2n$-dimensional almost complex manifold with
Norden metric, i.~e. $J$ is an almost complex structure and $g$ is
a metric on $M$ such that
\begin{equation}\label{Jg}
J^2X=-X, \qquad g(JX,JY)=-g(X,Y)
\end{equation}
for all differentiable vector fields $X$, $Y$ on $M$, i.~e. $X, Y
\in \X(M)$.

The associated metric $\tilde{g}$ of $g$ on $M$ given by
$\tilde{g}(X,Y)=g(X,JY)$ for all $X, Y \in \X(M)$ is a Norden
metric, too. Both metrics are necessarily of signature $(n,n)$.
The manifold $(M,J,\tilde{g})$ is an almost complex manifold with
Norden metric, too.

Further, $X$, $Y$, $Z$, $U$ ($x$, $y$, $z$, $u$, respectively)
will stand for arbitrary differentiable vector fields on $M$
(vectors in $T_pM$, $p\in M$, respectively).

The Levi-Civita connection of $g$ is denoted by $\nabla$. The
tensor filed $F$ of type $(0,3)$ on $M$ is defined by
\begin{equation}\label{F}
F(X,Y,Z)=g\bigl( \left( \nabla_X J \right)Y,Z\bigr).
\end{equation}
It has the following symmetries
\begin{equation}\label{F-prop}
F(X,Y,Z)=F(X,Z,Y)=F(X,JY,JZ).
\end{equation}

Further, let $\{e_i\}$ ($i=1,2,\dots,2n$) be an arbitrary basis of
$T_pM$ at a point $p$ of $M$. The components of the inverse matrix
of $g$ are denoted by $g^{ij}$ with respect to the basis
$\{e_i\}$.

The Lie form $\theta$ associated with $F$ is defined by
\begin{equation}\label{theta}
\theta(z)=g^{ij}F(e_i,e_j,z).
\end{equation}

A classification of the considered manifolds with respect to $F$
is given in \cite{GaBo}. Eight classes of almost complex manifolds
with Norden metric are characterized there according to the
properties of $F$. The three basic classes are given as follows
\begin{equation}\label{class}
\begin{array}{l}
\W_1\co F(x,y,z)=\frac{1}{4n} \left\{
g(x,y)\theta(z)+g(x,z)\theta(y)\right. \\[4pt]
\phantom{\mathcal{W}_1\co F(x,y,z)=\frac{1}{4n} }\left.
    +g(x,J y)\theta(J z)
    +g(x,J z)\theta(J y)\right\};\\[4pt]
\W_2\co \mathop{\s} \limits_{x,y,z}
F(x,y,J z)=0,\quad \theta=0;\\[8pt]
\W_3\co \mathop{\s} \limits_{x,y,z} F(x,y,z)=0,
\end{array}
\end{equation}
where $\s $ is the cyclic sum by three arguments.

The special class $\W_0$ of the K\"ahler manifolds with Norden
metric belonging to any other class is determined by the condition
$F=0$.
                                                                             %%%

\subsection{Curvature properties}\label{sec-curv}

Let $R$ be the curvature tensor field of $\nabla$ defined by
\begin{equation}\label{R}
    R(X,Y)Z=\nabla_X \nabla_Y Z - \nabla_Y \nabla_X Z -
    \nabla_{[X,Y]}Z.
\end{equation}
The corresponding tensor field of type $(0,4)$ is
determined as follows
\begin{equation}\label{R04}
    R(X,Y,Z,U)=g(R(X,Y)Z,U).
\end{equation}
The Ricci tensor $\rho$ and the scalar curvature $\tau$ are
defined as usual by
\begin{equation}\label{rho-tau}
    \rho(y,z)=g^{ij}R(e_i,y,z,e_j),\qquad \tau=g^{ij}\rho(e_i,e_j).
\end{equation}

Let $\alpha=\{x,y\}$ be a non-degenerate 2-plane (i.~e.
$\pi_1(x,y,y,x)=g(x,x)g(y,y) -g(x,y)^2 \neq 0$) spanned by vectors
$x, y \in T_pM, p\in M$. Then, it is known, the sectional
curvature of $\alpha$ is defined by the following equation
\begin{equation}\label{k}
    k(\alpha)=k(x,y)=\frac{R(x,y,y,x)}{\pi_1(x,y,y,x)}.
\end{equation}

The basic sectional curvatures in $T_pM$ with an almost complex
structure and a Norden metric $g$ are
\begin{itemize}
    \item \emph{holomorphic sectional curvatures} if $J\alpha=\alpha$;
    \item \emph{totally real sectional curvatures} if
    $J\alpha\perp\alpha$ with respect to $g$.
\end{itemize}

In \cite{GrDjMe}, a \emph{holomorphic bisectional curvature}
$h(x,y)$ for a pair of holomorphic 2-planes $\alpha_1=\{x,Jx\}$
and $\alpha_2=\{y,Jy\}$ is defined by
\begin{equation}\label{h}
    h(x,y)=-\frac{R(x,Jx,y,Jy)}
    {\sqrt{\pi_1(x,Jx,x,Jx)\pi_1(y,Jy,y,Jy)}},
\end{equation}
where $x$, $y$ do not lie along the totally isotropic directions,
i.~e. both of the couples $\bigl(g(x,x), g(x,Jx)\bigr)$ and
$\bigl(g(y,y), g(y,Jy)\bigr)$ are different from the couple
$\left(0,0\right)$. The holomorphic bisectional curvature is
invariant with respect to the basis of the 2-planes $\alpha_1$ and
$\alpha_2$. In particular, if $\alpha_1=\alpha_2$, then the
holomorphic bisectional curvature coincides with the holomorphic
sectional curvature of the 2-plane $\alpha_1=\alpha_2$.

\subsection{Isotropic K\"ahler manifolds}\label{sec-iK}
The square norm $\nJ$ of $\nabla J$ is defined in \cite{GRMa} by
\begin{equation}\label{snorm}
    \nJ=g^{ij}g^{kl}
    g\bigl(\left(\nabla_{e_i} J\right)e_k,\left(\nabla_{e_j}
    J\right)e_l\bigr).
\end{equation}

Having in mind the definition \eqref{F} of the tensor $F$ and the
properties \eqref{F-prop}, we obtain the following equation for
the square norm of $\nabla J$
\begin{equation}\label{snormF}
    \nJ=g^{ij}g^{kl}g^{pq}F_{ikp}F_{jlq},
\end{equation}
where $F_{ikp}=F(e_i,e_k,e_p)$.

\begin{defn}[\cite{MekMan-1}]\label{iK}
An almost complex manifold with Norden metric satisfying the
condition $\nJ=0$ is called an \emph{isotropic K\"ahler manifold
with Norden metric}.
\end{defn}

\begin{rem}
It is clear, if a manifold belongs to the class $\W_0$, then
it is isotropic K\"ahlerian but the inverse statement is not
always true.
\end{rem}

\section{Lie groups as Quasi-K\"ahler manifolds with \\ Killing Norden
metric}\label{sec-qK}

The only class of the three basic classes, where the almost
complex structure is not integrable, is the class $\W_3$ -- the
class of the \emph{quasi-K\"ahler manifolds with Norden metric}.

Let us remark that the definitional condition from \eqref{class}
implies the vanishing of the Lie form $\theta$ for the class
$\W_3$.

Let $V$ be a $2n$-dimensional vector space and consider the
structure of the Lie algebra defined by the brackets $
[E_i,E_j]=C_{ij}^kE_k, $ where $\{E_1,E_2,\dots,E_{2n}\}$ is a
basis of $V$ and $C_{ij}^k\in \R$.

Let $G$ be the associated connected Lie group and
$\{X_1,X_2,\dots,X_{2n}\}$ be a global basis of left invariant
vector fields induced by the basis of $V$. Then the Jacobi
identity has the form
\begin{equation}\label{Jac}
    \mathop{\s} \limits_{X_i,X_j,X_k}
    \bigl[[X_i,X_j],X_k\bigr]=0.
\end{equation}

Next we define an almost complex structure by the conditions
\begin{equation}\label{J}
JX_i=X_{n+i},\quad JX_{n+i}=-X_i,\qquad i\in\{1,2,\dots,n\}.
\end{equation}
Let us consider the left invariant metric defined by the following
way
\begin{equation}\label{g}
\begin{array}{l}
  g(X_i,X_i)=-g(X_{n+i},X_{n+i})=1, \qquad i\in\{1,2,\dots,n\} \\[4pt]
  g(X_j,X_k)=0,\qquad j\neq k \in \{1,2,\dots,2n\}. \\
\end{array}
\end{equation}
The introduced metric is a Norden metric because of \eqref{J}.

In this way, the induced $2n$-dimensional manifold $(G,J,g)$ is an
almost complex manifold with Norden metric, in short \emph{almost
Norden manifold}.

The condition the Norden metric $g$ be a Killing metric of the Lie
group $G$ with the corresponding Lie algebra $\g$ is $g(\ad
X(Y),Z)=-g(Y,\ad X(Z)$, where $X,Y,Z \in \g$ and $\ad X
(Y)=[X,Y]$. It is equivalent to the condition the metric $g$ to be
an invariant metric, i.~e.
\begin{equation}\label{inv}
    g\left([X,Y],Z\right)+g\left([X,Z],Y\right)=0.
\end{equation}

\begin{thm}\label{Th:Kil_g}
    If $(G,J,g)$ is an almost Norden manifold with a Killing metric $g$, then it is:
    \begin{enumerate}
    \renewcommand{\labelenumi}{(\roman{enumi})}
    \item
    a $\W_3$-manifold;
    \item
    a locally symmetric manifold.
    \end{enumerate}
\end{thm}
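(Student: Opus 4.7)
The plan is to exploit the fact that, by hypothesis, $g$ is bi-invariant on $G$, so Koszul's formula collapses on left-invariant fields to the well-known expression
\[
    \nabla_X Y = \tfrac{1}{2}[X,Y].
\]
Indeed, writing out Koszul's identity and pairing with $g$, the ad-invariance \eqref{inv} kills the three bracket-with-metric terms that would otherwise survive, leaving only a single bracket. All subsequent computations can then be carried out at the Lie-algebra level on the global frame $\{X_1,\dots,X_{2n}\}$.

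For part (i), I would plug this connection formula into \eqref{F}. Since the Norden condition \eqref{Jg} yields $g(JA,B)=g(A,JB)$, one obtains
\[
    F(X,Y,Z)=\tfrac{1}{2}\bigl\{g([X,JY],Z)-g([X,Y],JZ)\bigr\}.
\]
The ad-invariance \eqref{inv} together with the skew-symmetry of the bracket forces the trilinear form $(U,V,W)\mapsto g([U,V],W)$ to be \emph{totally} antisymmetric, hence invariant under cyclic permutation of its three arguments. Applying this cyclic invariance to each summand of $\mathop{\s}\nolimits_{X,Y,Z} g([X,Y],JZ)$ rearranges it into $\mathop{\s}\nolimits_{X,Y,Z} g([X,JY],Z)$, so the cyclic sum of $F$ vanishes. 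In view of the defining relation of $\W_3$ in \eqref{class}, this gives (i).

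For part (ii), I would insert $\nabla_X Y=\tfrac12[X,Y]$ into \eqref{R}; expanding the three terms and using the Jacobi identity \eqref{Jac} once yields the classical formula
\[
    R(X,Y)Z=-\tfrac{1}{4}\bigl[[X,Y],Z\bigr].
\]
To see that $\nabla R=0$, I would compute $(\nabla_W R)(X,Y)Z$ on left-invariant fields: it is a linear combination of four quadruple brackets,
\[
    \tfrac{1}{2}\bigl[W,[[X,Y],Z]\bigr],\quad
    \bigl[[[W,X],Y],Z\bigr],\quad
    \bigl[[X,[W,Y]],Z\bigr],\quad
    \bigl[[X,Y],[W,Z]\bigr],
\]
each arising from one of the four terms of the derivation rule for $\nabla_W$ acting on $R(X,Y)Z$. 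Two successive applications of Jacobi — first to $[W,[A,Z]]$ with $A=[X,Y]$, then to $[W,[X,Y]]$ — show that the first of these equals the sum of the other three, so they telescope to zero. Hence $\nabla R=0$ and $(G,J,g)$ is locally symmetric.

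The calculations are mechanical; the only real bookkeeping pitfall is the Norden sign convention $g(JA,JB)=-g(A,B)$, which, contrary to the Hermitian case, makes $J$ symmetric rather than skew with respect to $g$. Keeping track of this correctly is what turns the two cyclic sums in the expression for $\mathop{\s} F$ into the \emph{same} sum (rather than negatives), so that they cancel instead of adding. Beyond this, the argument is a direct verification of the two classical identities for bi-invariant metrics, adapted to the Norden setting.
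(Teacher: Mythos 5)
Your argument is correct, and for part (i) it coincides with the paper's own proof: both derive $\nabla_X Y=\tfrac12[X,Y]$ from the Koszul formula and \eqref{inv}, obtain $F(X,Y,Z)=\tfrac12\{g([X,JY],Z)-g([X,Y],JZ)\}$ (using that $J$ is $g$-symmetric in the Norden setting, as you rightly emphasize), and conclude $\mathop{\s}F=0$ from \eqref{inv}; you merely make explicit the step the paper leaves implicit, namely that $(U,V,W)\mapsto g([U,V],W)$ is totally antisymmetric and hence invariant under cyclic permutation. For part (ii) you take a slightly different, and arguably cleaner, route: the paper works with the $(0,4)$-tensor, rewrites it as $R_{ijkl}=-\tfrac14 g([X_i,X_j],[X_k,X_l])$ using the Killing condition, computes $\nabla R$ componentwise, applies Jacobi, and then invokes \eqref{inv} a second time to see that the resulting sum vanishes; you instead stay with the $(1,3)$-tensor $R(X,Y)Z=-\tfrac14[[X,Y],Z]$ and annihilate $(\nabla_W R)(X,Y)Z$ purely by two applications of the Jacobi identity (the derivation property of $\ad W$), with no further use of the metric. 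Both computations are standard for bi-invariant metrics and both are complete; your version of (ii) isolates the fact that local symmetry is here a Lie-algebraic identity independent of the particular invariant metric, whereas the paper's version records along the way the formula \eqref{invRijks} that it reuses later for the explicit curvature components.
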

\begin{proof}
(i) Let $\nabla$ be the Levi-Civita connection of $g$. Then the
following well-known condition is valid
\begin{equation}\label{LC}
\begin{array}{l}
    2g(\nabla_X Y,Z)=Xg(Y,Z)+Yg(X,Z)-Zg(X,Y)\\[4pt]
    \phantom{2g(\nabla_X Y,Z)=}+g([X,Y],Z)+g([Z,X],Y)+g([Z,Y],X).
\end{array}
\end{equation}
The last equation and \eqref{inv} imply
\begin{equation}\label{invLC}
    \nabla_{X_i} X_j=\frac{1}{2}[X_i,X_j],\quad i,j\in\{1,2,\dots,2n\}.
\end{equation}

Then we receive consecutively
\[
\left( \nabla_{X_i} J \right)X_j=\nabla_{X_i} JX_j - J\nabla_{X_i}
X_j=\frac{1}{2}\bigl( [X_i, JX_j] - J[X_i, X_j] \bigr),
\]
\begin{equation}\label{F-inv}
F(X_i,X_j,X_k)=\frac{1}{2}\Bigl\{g\bigl( [X_i, JX_j],X_k\bigr) -
g\bigl([X_i, X_j],JX_k \bigr) \Bigr\}.
\end{equation}
According to \eqref{inv}, the last equation implies
$\mathop{\s}_{X_i,X_j,X_k} F(X_i,X_j,X_k)=0,$ i.~e. the manifold
belongs to the class $\W_3$.

(ii) In \cite{GrMaMe-2} is given the following form of the
curvature tensor
\[
    R(X_i,X_j,X_k,X_l)=-\frac{1}{4}g\Bigl(\bigl[[X_i,X_j],X_k\bigr],X_l]\Bigr).
\]
Using the condition \eqref{inv} for a Killing metric, we obtain
\begin{equation}\label{invRijks}
    R(X_i,X_j,X_k,X_l)=-\frac{1}{4}g\Bigl([X_i,X_j],[X_k,X_l]\Bigr).
\end{equation}

According to the constancy of the component $R_{ijks}$ and
\eqref{invLC} and \eqref{invRijks}, we get the covariant
derivative of the tensor $R$ of type $(0,4)$ as follows
\begin{equation}\label{nablaR}
\begin{split}
    \left( \nabla_{X_i} R
    \right)&(X_j,X_k,X_l,X_m)=\\
    =\frac{1}{8}
    &\Bigl\{
        g\Bigl(\bigl[[X_i,X_j],X_k\bigr]-\bigl[[X_i,X_k],X_j\bigr],[X_l,X_m]\bigr]\Bigr)\\[4pt]
        %\phantom{\left( \nabla_{X_i} R
        %        \right)(X_j,X_k,X_l,X_m)=\frac{1}{8}\Bigl\{}
        &+g\Bigl(\bigl[[X_i,X_l],X_m\bigr]-\bigl[[X_i,X_m],X_l\bigr],[X_j,X_k]\bigr]\Bigr)
    \Bigr\}.
\end{split}
\end{equation}
We apply the the Jacobi identity \eqref{Jac} to the double
commutators. Then the equation \eqref{nablaR} gets the form
\begin{equation}\label{nablaR=}
\begin{split}
    \left( \nabla_{X_i} R
    \right)(X_j,X_k,X_l,X_m)=-\frac{1}{8}
    &\Bigl\{
        g\Bigl(\bigl[X_i,[X_j,X_k]\bigr],[X_l,X_m]\Bigr)\\[4pt]
        %\phantom{\left( \nabla_{X_i} R
        %        \right)(X_j,X_k,X_l,X_m)=\frac{1}{8}\Bigl\{}
        &+g\Bigl(\bigl[X_i,[X_l,X_m]\bigr],[X_j,X_k]\bigr)\Bigr)
    \Bigr\}.
\end{split}
\end{equation}Since $g$ is a Killing metric, then applying \eqref{inv} to \eqref{nablaR=}
we obtain the identity $\nabla R=0$, i.~e. the manifold is locally
symmetric.
\end{proof}

%%%%%%%%%%%%%%%%%%%%%%%%%%%%%%%%%%%%%%%%%%%%%%%%%%%%%%%%%%%%%%%%%%%%%%%%%%%    3

\section{The Lie Group as a 6-Dimensional $\W_3$-Manifold}
\label{sec_6dim}

Let $(G,J,g)$ be a 6-dimensional almost Norden manifold with
Killing metric $g$. Having in mind \thmref{Th:Kil_g} we assert
that $(G,J,g)$ is a $\W_3$-manifold. Let the commutators have the
following decomposition
\begin{equation}\label{[]}
    [X_i,X_j]=\gamma_{ij}^k X_k,\quad \gamma_{ij}^k \in \R,\qquad
    i,j,k\in\{1,2,\dots,6\}.
\end{equation}

According to the condition \eqref{inv} for a Killing metric $g$,
the equations \eqref{[]} take the form given in \tabref{table1}.
%%%%%%%%%%%%%%%%%%%%%%%%%%%%%%%%%%%%%%%%%%%%%%%%%%%%%%%%%%%%%%%%%%%%%%%%     table1
%Table 1
\begin{table}
  \centering
  \caption{The Lie brackets with 20 parameters.}\label{table1}
\begin{tabular}{||c||c|c|c||c|c|c||}
  \hline
  % after \\: \hline or \cline{col1-col2} \cline{col3-col4} ...
             & $X_1$ & $X_2$ & $X_3$ & $X_4$ & $X_5$ & $X_6$
             \\
  \hline\hline
  $[X_2,X_3]$ & $\lambda_{10}$ &                &                 & $ \lambda_7$ & $ \lambda_1$ & $ \lambda_2$ \\\hline
  $[X_3,X_1]$ &                & $\lambda_{10}$ &                 & $ \lambda_4$ & $ \lambda_8$ & $ \lambda_3$ \\\hline
  $[X_1,X_2]$ &                &                &  $\lambda_{10}$ & $ \lambda_5$ & $ \lambda_6$ & $ \lambda_9$ \\\hline
  \hline
  $[X_5,X_6]$ & $\lambda_{17}$ & $\lambda_{14}$ & $\lambda_{15}$ & $\lambda_{20}$ &                &                \\\hline
  $[X_6,X_4]$ & $\lambda_{11}$ & $\lambda_{18}$ & $\lambda_{16}$ &                & $\lambda_{20}$ &                \\\hline
  $[X_4,X_5]$ & $\lambda_{12}$ & $\lambda_{13}$ & $\lambda_{19}$ &                &                & $\lambda_{20}$ \\\hline
  \hline
  $[X_1,X_4]$ &              & $ \lambda_5$ & $-\lambda_4$ &                 & $-\lambda_{12}$ & $ \lambda_{11}$ \\\hline
  $[X_1,X_5]$ &              & $ \lambda_6$ & $-\lambda_8$ & $ \lambda_{12}$ &                 & $-\lambda_{17}$ \\\hline
  $[X_1,X_6]$ &              & $ \lambda_9$ & $-\lambda_3$ & $-\lambda_{11}$ & $ \lambda_{17}$ &                 \\\hline
  \hline
  $[X_2,X_4]$ & $-\lambda_5$ &              & $ \lambda_7$ &                 & $-\lambda_{13}$ & $ \lambda_{18}$ \\\hline
  $[X_2,X_5]$ & $-\lambda_6$ &              & $ \lambda_1$ & $ \lambda_{13}$ &                 & $-\lambda_{14}$ \\\hline
  $[X_2,X_6]$ & $-\lambda_9$ &              & $ \lambda_2$ & $-\lambda_{18}$ & $ \lambda_{14}$ &                 \\\hline
  \hline
  $[X_3,X_4]$ & $ \lambda_4$ & $-\lambda_7$ &              &                 & $-\lambda_{19}$ & $ \lambda_{16}$ \\\hline
  $[X_3,X_5]$ & $ \lambda_8$ & $-\lambda_1$ &              & $ \lambda_{19}$ &                 & $-\lambda_{15}$ \\\hline
  $[X_3,X_6]$ & $ \lambda_3$ & $-\lambda_2$ &              & $-\lambda_{16}$ & $ \lambda_{15}$ &                 \\\hline
\end{tabular}
\end{table}

Further we consider the special case when the condition
\begin{equation}\label{usl}
    g\Bigl([X_i,X_j],[X_k,X_l]\Bigr)=0
\end{equation}
for all mutually different indices $i,j,k,l$ in $\{1,2,\dots,6\}$
holds.

According to the Jacobi identity \eqref{Jac} and the condition
\eqref{usl}, \tabref{table1} is transformed to \tabref{table2}. %%%%%%%%%%%%%%%%%%%%%    table2
%Table 2
\begin{table}
  \centering
  \caption{The Lie brackets with 6 parameters.}\label{table2}
\begin{tabular}{||c||c|c|c||c|c|c||}
  \hline
  % after \\: \hline or \cline{col1-col2} \cline{col3-col4} ...
             & $X_1$ & $X_2$ & $X_3$ & $X_4$ & $X_5$ & $X_6$
             \\
  \hline\hline
  $[X_2,X_3]$ &              &              &              &              & $ \lambda_1$ & $ \lambda_2$ \\\hline
  $[X_3,X_1]$ &              &              &              & $ \lambda_4$ &              & $ \lambda_3$ \\\hline
  $[X_1,X_2]$ &              &              &              & $ \lambda_5$ & $ \lambda_6$ &              \\\hline
  \hline
  $[X_5,X_6]$ &              & $-\lambda_4$ & $-\lambda_5$ &              &              &              \\\hline
  $[X_6,X_4]$ & $-\lambda_1$ &              & $-\lambda_6$ &              &              &              \\\hline
  $[X_4,X_5]$ & $-\lambda_2$ & $-\lambda_3$ &              &              &              &              \\\hline
  \hline
  $[X_1,X_4]$ &              & $ \lambda_5$ & $-\lambda_4$ &              & $ \lambda_2$ & $-\lambda_1$ \\\hline
  $[X_1,X_5]$ &              & $ \lambda_6$ &              & $-\lambda_2$ &              &              \\\hline
  $[X_1,X_6]$ &              &              & $-\lambda_3$ & $ \lambda_1$ &              &              \\\hline
  \hline
  $[X_2,X_4]$ & $-\lambda_5$ &              &              &              & $ \lambda_3$ &              \\\hline
  $[X_2,X_5]$ & $-\lambda_6$ &              & $ \lambda_1$ & $-\lambda_3$ &              & $ \lambda_4$ \\\hline
  $[X_2,X_6]$ &              &              & $ \lambda_2$ &              & $-\lambda_4$ &              \\\hline
  \hline
  $[X_3,X_4]$ & $ \lambda_4$ &              &              &              &              & $-\lambda_6$ \\\hline
  $[X_3,X_5]$ &              & $-\lambda_1$ &              &              &              & $ \lambda_5$ \\\hline
  $[X_3,X_6]$ & $ \lambda_3$ & $-\lambda_2$ &              & $ \lambda_6$ & $-\lambda_5$ &              \\\hline
\end{tabular}
\end{table}

The Lie groups $G$ thus obtained are of a family which is
characterized by six real parameters $\lambda_i$ $(i = 1,\dots,
6)$. Therefore, for the manifold $(G,J,g)$ constructed above, we
establish the truthfulness of the following
\begin{thm}\label{Th:ex}
Let $(G,J,g)$ be a 6-dimensional almost Norden manifold, where $G$
is a connected Lie group with corresponding Lie algebra $\g$
determined by the global basis of left invariant vector fields
$\{X_1,X_2,\dots,X_6\}$; $J$ is an almost complex structure
defined by \eqref{J} and $g$ is an invariant Norden metric
determined by \eqref{g} and \eqref{inv}. Then $(G,J,g)$ is a
quasi-K\"ahler manifold with Norden metric if and only if $G$
belongs to the 6-parametric family of Lie groups determined by
\tabref{table2}.
\end{thm}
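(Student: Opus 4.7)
The plan has three stages. Stage one records the full Killing condition in coordinates; stage two reduces the resulting twenty parameters to six via \eqref{Jac} and \eqref{usl}; stage three invokes \thmref{Th:Kil_g}(i) to conclude the quasi-K\"ahler property.

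For stage one, I would start from $[X_i,X_j]=\gamma_{ij}^k X_k$ and read \eqref{inv} componentwise. Using the diagonal signature in \eqref{g} with $\varepsilon_i=g(X_i,X_i)\in\{\pm1\}$, the identity $g([X_i,X_j],X_k)+g([X_i,X_k],X_j)=0$ becomes the simple algebraic relation $\varepsilon_k\gamma_{ij}^k+\varepsilon_j\gamma_{ik}^j=0$. Running through all ordered index triples in $\{1,\dots,6\}$ and collecting the surviving independent coefficients reduces the $45$ a priori free structure constants to the twenty parameters $\lambda_1,\ldots,\lambda_{20}$ arranged in \tabref{table1}.

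For stage two, the brackets of \tabref{table1} must additionally satisfy the Jacobi identity \eqref{Jac} and the degeneracy condition \eqref{usl}. On each of the $\binom{6}{3}=20$ basis triples \eqref{Jac} yields a homogeneous quadratic relation in the $\lambda_k$, and on each of the $\binom{6}{4}=15$ four-element subsets \eqref{usl} yields a bilinear relation among them. Substituting the second family into the first decouples the Jacobi relations across the two $J$-invariant blocks $\{X_1,X_2,X_3\}$ and $\{X_4,X_5,X_6\}$ and forces the fourteen parameters $\lambda_7,\ldots,\lambda_{20}$ to vanish, leaving exactly the brackets of \tabref{table2} in the remaining six parameters $\lambda_1,\ldots,\lambda_6$. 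This yields the ``only if'' direction.

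For the converse, I would take the brackets of \tabref{table2} and verify four items: antisymmetry (immediate from the sign pattern in the table), the Killing condition \eqref{inv} (built into the way the table was constructed), the Jacobi identity \eqref{Jac} (a direct triple-by-triple check in which each $\lambda_i$ appears with opposite signs in the matching entries so that the cyclic sums cancel), and \eqref{usl}. Once \eqref{Jac} is confirmed, the bracket table determines a genuine Lie algebra on $\mathrm{span}\{X_1,\ldots,X_6\}$, hence a connected Lie group $G$ carrying left-invariant $(J,g)$; \thmref{Th:Kil_g}(i) then delivers the $\W_3$ membership, i.e.\ the quasi-K\"ahler property.

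The principal obstacle is the bookkeeping in stage two: $20$ Jacobi polynomials and $15$ relations from \eqref{usl} in $20$ unknowns must be processed in an order that makes the reduction transparent. A workable scheme is to group the basis triples by how many indices they share with $\{1,2,3\}$ versus $\{4,5,6\}$, dispatch first the pure-block triples (which fix the first three rows of \tabref{table2}) and then the mixed-block triples, using \eqref{usl} at each stage to discard products of otherwise unrelated $\lambda_k$ before attacking the residual linear cancellations.
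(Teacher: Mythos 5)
Your overall strategy is the same as the paper's: the paper likewise obtains \tabref{table1} from the invariance condition \eqref{inv}, passes to \tabref{table2} by imposing the Jacobi identity \eqref{Jac} together with the auxiliary condition \eqref{usl}, and gets the quasi-K\"ahler property from \thmref{Th:Kil_g}(i); the paper supplies essentially no more computational detail than your outline does. Two points in your reduction are off, though. First, the count: there are $\binom{6}{2}\cdot 6=90$ structure constants a priori, not $45$. The cleanest route to the twenty parameters is to observe that \eqref{inv} together with the antisymmetry of the bracket makes the $(0,3)$-array $g([X_i,X_j],X_k)$ totally antisymmetric in $i,j,k$, hence it has $\binom{6}{3}=20$ independent components --- exactly the $\lambda_1,\dots,\lambda_{20}$ of \tabref{table1}.

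Second, and more substantively, your claim that \eqref{Jac} and \eqref{usl} ``force the fourteen parameters $\lambda_7,\dots,\lambda_{20}$ to vanish'' contradicts the table you are aiming for: in \tabref{table2} the brackets $[X_5,X_6]$, $[X_6,X_4]$, $[X_4,X_5]$ are nonzero, which in the notation of \tabref{table1} means $\lambda_{11}=-\lambda_1$, $\lambda_{12}=-\lambda_2$, $\lambda_{13}=-\lambda_3$, $\lambda_{14}=-\lambda_4$, $\lambda_{15}=-\lambda_5$, $\lambda_{16}=-\lambda_6$, while only the eight parameters $\lambda_7,\lambda_8,\lambda_9,\lambda_{10},\lambda_{17},\lambda_{18},\lambda_{19},\lambda_{20}$ actually vanish. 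Followed literally, your stage two would land on a strictly smaller family than \tabref{table2}, so the identifications must be carried out, not just the annihilations. Finally, your own outline makes visible a looseness you share with the paper: \eqref{usl} is indispensable for the ``only if'' direction yet does not appear among the theorem's stated hypotheses, and by \thmref{Th:Kil_g}(i) every Killing Norden metric already yields a $\W_3$-manifold whether or not \eqref{usl} holds, so the equivalence as literally phrased really characterizes the special case singled out by \eqref{usl}.
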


For the constructed manifold the Killing form $B(X,Y)=\tr(\ad
X,\ad Y)$ has the following determinant
\[
\det
B=64\left[(\lambda_1^2+\lambda_2^2+\lambda_3^2-\lambda_4^2-\lambda_5^2-\lambda_6^2)^2
-4(\lambda_1^2-\lambda_4^2)(\lambda_3^2-\lambda_6^2)\right]^3.
\]

A special case when the holomorphic sectional curvatures are zero
is considered in \cite{MaGrMe-4}. In the cited case we have
$\lambda_1=\lambda_4$, $\lambda_2=\lambda_5$,
$\lambda_3=\lambda_6$ and then the Killing form is degenerate.

Now, in the general case the question whether the Killing form $B$
can be a Norden metric is reasonable. The answer is negative
because the setting of the condition $B$ to be a Norden metric
implies $\lambda_i=0$ $(i=1,2,\dots,6)$.

%%%%%%%%%%%%%%%%%%%%%%%%%%%%%%%%%%%%%%%%%%%%%%%%%%%%%%%%%%%%%%%%%%%%%%%%%%    3

\section{Geometric characteristics of the constructed manifold}

Let $(G,J,g)$ be the 6-dimensional $\W_3$-manifold introduced in
the previous section.

\subsection{The components of the tensor $F$}
Then by direct calculations, having in mind \eqref{F}, \eqref{J},
\eqref{g}, \eqref{inv}, \eqref{invLC}, \eqref{F-inv} and
\tabref{table2}, we obtain the nonzero components of the tensor
$F$ as follows

\begin{equation}\label{Fijk}
\begin{split}
\lambda_1&=2F_{116}=2F_{161}=-2F_{134}=-2F_{143}=2F_{223}=2F_{232}=2F_{256}\\[4pt]
\phantom{\lambda_1} &=2F_{265}=-F_{322}=-F_{355}=2F_{413}=2F_{431}=2F_{446}=2F_{464}\\[4pt]
\phantom{\lambda_1} &=-2F_{526}=-2F_{562}=2F_{535}=2F_{553}=-F_{611}=-F_{644},\\[4pt]
\lambda_2&=-2F_{115}=-2F_{151}=2F_{124}=2F_{142}=F_{233}=F_{266}=-2F_{323}\\[4pt]
\phantom{\lambda_2}&=-2F_{332}=-2F_{356}=-2F_{365}=-2F_{412}=-2F_{421}=-2F_{445}\\[4pt]
\phantom{\lambda_2}&=-2F_{454}=F_{511}=F_{544}=-2F_{626}=-2F_{662}=2F_{635}=2F_{653},\\[4pt]
\lambda_3&=-F_{133}=-F_{166}=-2F_{215}=-2F_{251}=2F_{224}=2F_{242}=2F_{313}\\[4pt]
\phantom{\lambda_3}&=2F_{331}=2F_{346}=2F_{364}=-F_{422}=-F_{455}=2F_{512}=2F_{521}\\[4pt]
\phantom{\lambda_3}&=2F_{545}=2F_{554}=2F_{616}=2F_{661}=-2F_{634}=-2F_{643},\\[4pt]
\lambda_4&=-2F_{113}=-2F_{131}=-2F_{146}=-2F_{164}=-2F_{226}=-2F_{262}\\[4pt]
\phantom{\lambda_4} &=2F_{235}=2F_{253}=F_{311}=F_{344}=2F_{416}=2F_{461}=-2F_{434}\\[4pt]
\phantom{\lambda_4} &=-2F_{523}=-2F_{443}=-2F_{532}=-2F_{556}=-2F_{565}=F_{622}=F_{655},\\[4pt]
\lambda_5&=2F_{112}=2F_{121}=2F_{145}=2F_{154}=-F_{211}=-F_{244}=-2F_{326}\\[4pt]
\phantom{\lambda_5} &=-2F_{362}=2F_{335}=2F_{353}=-2F_{415}=-2F_{451}=2F_{424}=2F_{442}\\[4pt]
\phantom{\lambda_5} &=-F_{533}=-F_{566}=2F_{623}=2F_{632}=2F_{656}=2F_{665},\\[4pt]
\lambda_6&=F_{122}=F_{155}=-2F_{212}=-2F_{221}=-2F_{245}=-2F_{254}=2F_{316}\\[4pt]
\phantom{\lambda_6} &=2F_{361}=-2F_{334}=-2F_{343}=F_{433}=F_{466}=-2F_{515}=-2F_{551}\\[4pt]
\phantom{\lambda_6} &=2F_{524}=2F_{542}=-2F_{613}=-2F_{631}=-2F_{646}=-2F_{664}.\\[4pt]
\end{split}
\end{equation}
where $F_{ijk}=F(X_i,X_j,X_k)$.

\subsection{The square norm of the Nijenhuis
tensor}

Let $N$ be the Nijenhuis tensor of the almost complex structure
$J$ on $G$, i.~e.
\begin{equation}\label{N}
    N(X,Y)=[X,Y]+J[JX,Y]+J[X,JY]-[JX,JY], \quad X, Y \in \g.
\end{equation}
Having in mind the equations in \tabref{table2}, we obtain the
nonzero components $N_{ij}=N(X_i,X_j)$ $(i, j=1,2,\dots,6)$ as
follows
\begin{equation}\label{Nij}
\begin{array}{l}
    N_{23}=-N_{56}=2\left(\lambda_4 X_2+\lambda_5 X_3+\lambda_1 X_5+\lambda_2 X_6\right),\\[4pt]
    N_{31}=-N_{64}=2\left(\lambda_1 X_1+\lambda_6 X_3+\lambda_4 X_4+\lambda_3 X_6\right),\\[4pt]
    N_{12}=-N_{45}=2\left(\lambda_2 X_1+\lambda_3 X_2+\lambda_5 X_4+\lambda_6 X_5\right),\\[4pt]
    N_{35}=-N_{26}=2\left(-\lambda_1 X_2-\lambda_2 X_3+\lambda_4 X_5+\lambda_5 X_6\right),\\[4pt]
    N_{16}=-N_{34}=2\left(-\lambda_4 X_1-\lambda_3 X_3+\lambda_1 X_4+\lambda_6 X_6\right),\\[4pt]
    N_{24}=-N_{15}=2\left(-\lambda_5 X_1-\lambda_6 X_2+\lambda_2 X_4+\lambda_3 X_5\right).\\[4pt]
\end{array}
\end{equation}
Therefore its square norm $\nN=g^{ik}g^{ks}g(N_{ij},N_{ks})$
vanishes, i.~e. $\nN=0$, where the nonzero components of the
inverse matrix of $g$ are the following
\begin{equation}\label{g^ij}
    g^{11}=g^{22}=g^{33}=-g^{44}=-g^{55}=-g^{66}=1.
\end{equation}
Then we have the following
\begin{prop}\label{Prop3.1}
    The Nijenhuis tensor of the manifold $(G,J,g)$ is isotropic.
\end{prop}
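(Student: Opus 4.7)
The plan is to compute $\nN$ directly from the explicit components in \eqref{Nij}, using the diagonal inverse metric \eqref{g^ij}. Since $(g^{ij})$ is diagonal, the defining sum collapses to $\nN=\sum_{i,j}g^{ii}g^{jj}g(N_{ij},N_{ij})$; the antisymmetry $N(Y,X)=-N(X,Y)$ of the Nijenhuis tensor then reduces this further to $2\sum_{i<j}g^{ii}g^{jj}g(N_{ij},N_{ij})$.

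Of the fifteen unordered pairs $\{i,j\}$, the formulas in \eqref{Nij} list exactly twelve with $N_{ij}\neq 0$, organised into six sign-identifications of the form $N_{ij}=-N_{kl}$ (for instance $N_{23}=-N_{56}$ and $N_{35}=-N_{26}$). For each of the six representatives I would compute $g(N_{ij},N_{ij})$ using \eqref{g}: each such $N_{ij}$ is a linear combination of exactly four of the basis vectors $X_a$, and because $g(X_a,X_a)=\pm 1$, the squared norm comes out as a $\pm 1$-weighted sum of four of the quantities $\lambda_a^2$.

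The cancellation then arises from the interaction between the pairing pattern in \eqref{Nij} and the signature of $g$. For each identification $N_{ij}=-N_{kl}$ one checks $g^{ii}g^{jj}=g^{kk}g^{ll}$, so the two partners add rather than cancel. In addition, any two ``dual'' identifications that involve the same four $\lambda_a$'s (such as $N_{23}=-N_{56}$ paired with $N_{35}=-N_{26}$) have $g(N,N)$ flipped in sign exactly where $g^{ii}g^{jj}$ is, so they too contribute additively. The total therefore assembles into three blocks of the shape $\lambda_a^2+\lambda_b^2-\lambda_c^2-\lambda_d^2$ in which each $\lambda_r^2$ occurs once with a plus and once with a minus sign, whence $\nN=0$.

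The sole obstacle is keeping track of four independent sources of signs, namely the antisymmetry of $N$, the identifications in \eqref{Nij}, the metric signature \eqref{g}, and the inverse metric \eqref{g^ij}; once these are aligned, no further input beyond the data in \S\ref{sec_6dim} is required.
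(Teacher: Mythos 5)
Your proposal is correct and follows the same route as the paper: the paper also establishes the proposition by listing the nonzero components $N_{ij}$ from \tabref{table2} and computing the square norm $\nN$ directly with the diagonal metric \eqref{g} and its inverse \eqref{g^ij}, asserting that it vanishes. Your account merely makes explicit the sign bookkeeping behind that cancellation (the three blocks $\lambda_4^2+\lambda_5^2-\lambda_1^2-\lambda_2^2$, $\lambda_1^2+\lambda_6^2-\lambda_3^2-\lambda_4^2$, $\lambda_2^2+\lambda_3^2-\lambda_5^2-\lambda_6^2$ summing to zero), which checks out.
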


\subsection{The square norm of $\nabla J$}
According to \eqref{g},  \eqref{Fijk} and \eqref{g^ij},
from \eqref{snormF} we obtain that the square norm of $\nabla J$
is zero, i.~e. $\nJ=0$. Then we have the following
\begin{prop}\label{Prop:iK}
    The manifold $(G,J,g)$ is isotropic K\"ahlerian.
\end{prop}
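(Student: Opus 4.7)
The plan is to insert the explicit data from \eqref{Fijk} and \eqref{g^ij} directly into the defining formula \eqref{snormF} for $\nJ$ and verify that the resulting sum vanishes.

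First I would observe that because the metric $g$ is diagonal in the basis $\{X_1,\dots,X_6\}$ with signature entries $\varepsilon_i$ satisfying $\varepsilon_1=\varepsilon_2=\varepsilon_3=1$ and $\varepsilon_4=\varepsilon_5=\varepsilon_6=-1$, formula \eqref{snormF} collapses to the single triple sum
\[
\nJ=\sum_{i,k,p=1}^{6}\varepsilon_i\varepsilon_k\varepsilon_p\,F_{ikp}^2,
\]
with no off-diagonal cross terms to track.

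Second, a key structural feature of \eqref{Fijk} is that every nonzero component $F_{ikp}$ is a rational multiple of exactly one of the six parameters $\lambda_m$; no component mixes two distinct $\lambda$'s. Therefore $\nJ$ splits cleanly as $\sum_{m=1}^{6}S_m$, where $S_m$ gathers the contributions of those index triples $(i,k,p)$ with $F_{ikp}\propto\lambda_m$. Each $S_m$ is a rational multiple of $\lambda_m^2$, and the task reduces to showing that the numerical coefficient vanishes in each of the six cases.

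Third, I would carry out the count for a representative value, say $m=1$: the $\lambda_1$-row of \eqref{Fijk} lists twenty index triples, sixteen of the ``half-weight'' type (each contributing $\pm\lambda_1^2/4$ to $S_1$) and four of the ``full-weight'' type (each contributing $\pm\lambda_1^2$). Attaching the sign $\varepsilon_i\varepsilon_k\varepsilon_p$ to each and tallying, one finds that the sixteen half-weight contributions split into eight positive and eight negative, while the four full-weight contributions split into two positive and two negative, so $S_1=0$. The five remaining cases follow by the same bookkeeping, made uniform by the cyclic symmetry in \tabref{table2} under permutation of the pairs $(X_1,X_4)$, $(X_2,X_5)$, $(X_3,X_6)$, which permutes the six rows of \eqref{Fijk} among themselves.

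The only genuine obstacle is that \eqref{Fijk} is long and easy to mis-tally; there is no conceptual difficulty beyond careful accounting of signs. Once $\nJ=0$ is established, \propref{Prop:iK} follows directly from Definition~\ref{iK}.
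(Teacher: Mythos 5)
Your proposal is correct and follows exactly the paper's route: the paper's proof of \propref{Prop:iK} is precisely the direct substitution of \eqref{Fijk} and \eqref{g^ij} into \eqref{snormF}, stated without showing the tally. Your sign count for the $\lambda_1$-block (eight positive and eight negative half-weight terms, two positive and two negative full-weight terms) checks out, and the remaining blocks cancel in the same way, so you have simply made explicit the computation the paper leaves implicit.
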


\subsection{The components of $R$}
Let $R$ be the curvature tensor of type (0,4) determined by
\eqref{R04} and \eqref{R} on $(G,J,g)$. We denote its components
by $R_{ijks}=R(X_i,X_j,X_k,X_s)$; $i,j,k,s\in\{1,2,\dots,6\}$.
Using \eqref{invLC}, \eqref{Jac}, \eqref{invRijks} and
\tabref{table2} we get the nonzero components of $R$ as follows
\begin{equation}\label{Rijks}
\begin{array}{c}
\begin{array}{l}
    R_{1441}=-\frac{1}{4}\left(\lambda_1^2+\lambda_2^2-\lambda_4^2-\lambda_5^2\right),\\[4pt]
    R_{2552}=\frac{1}{4}\left(\lambda_1^2-\lambda_3^2-\lambda_4^2+\lambda_6^2\right),\\[4pt]
    R_{3663}=\frac{1}{4}\left(\lambda_2^2+\lambda_3^2-\lambda_5^2-\lambda_6^2\right),\\[4pt]
\end{array}
\\[4pt]
\begin{array}{ll}
    R_{1221}=-\frac{1}{4}\left(\lambda_5^2+\lambda_6^2\right),\quad
    &
    R_{1331}=-\frac{1}{4}\left(\lambda_3^2+\lambda_4^2\right),\\[4pt]
    R_{1551}=-\frac{1}{4}\left(\lambda_2^2-\lambda_6^2\right),\quad
    &
    R_{1661}=-\frac{1}{4}\left(\lambda_1^2-\lambda_3^2\right),\\[4pt]
    R_{2332}=-\frac{1}{4}\left(\lambda_1^2+\lambda_2^2\right),\quad
    &
    R_{2442}=-\frac{1}{4}\left(\lambda_3^2-\lambda_5^2\right),\\[4pt]
    R_{2662}=\frac{1}{4}\left(\lambda_2^2-\lambda_4^2\right),\quad
    &
    R_{3443}=\frac{1}{4}\left(\lambda_4^2-\lambda_6^2\right),\\[4pt]
    R_{3553}=\frac{1}{4}\left(\lambda_1^2-\lambda_5^2\right),\quad
    &
    R_{4554}=\frac{1}{4}\left(\lambda_2^2+\lambda_3^2\right),\\[4pt]
    R_{4664}=\frac{1}{4}\left(\lambda_1^2+\lambda_6^2\right),\quad
    &
    R_{5665}=\frac{1}{4}\left(\lambda_4^2+\lambda_5^2\right),\\[4pt]
\end{array}
\\[4pt]
\begin{array}{l}
    R_{1561}=R_{2562}=R_{3563}=-R_{4564}=\frac{1}{4}\lambda_1\lambda_2,\quad\\[4pt]
    R_{1341}=R_{2342}=-R_{5345}=-R_{6346}=-\frac{1}{4}\lambda_1\lambda_3,\\[4pt]
    R_{1361}=R_{2362}=-R_{4364}=-R_{5365}=\frac{1}{4}\lambda_1\lambda_4,\\[4pt]
    R_{1261}=R_{3263}=-R_{4264}=-R_{5265}=-\frac{1}{4}\lambda_1\lambda_5,\\[4pt]
    R_{2132}=-R_{4134}=-R_{5135}=-R_{6136}=\frac{1}{4}\lambda_1\lambda_6,\\[4pt]
    R_{3123}=-R_{4124}=-R_{5125}=-R_{6126}=\frac{1}{4}\lambda_2\lambda_3,\\[4pt]
    R_{1351}=R_{2352}=-R_{4354}=-R_{6356}=-\frac{1}{4}\lambda_2\lambda_4,\\[4pt]
    R_{1251}=R_{3253}=-R_{4254}=-R_{6256}=\frac{1}{4}\lambda_2\lambda_5,\\[4pt]
    R_{1241}=R_{3243}=-R_{5245}=-R_{6246}=-\frac{1}{4}\lambda_2\lambda_6,\\[4pt]
    R_{1461}=R_{2462}=R_{3463}=-R_{5465}=\frac{1}{4}\lambda_3\lambda_4,\\[4pt]
    R_{2152}=R_{3153}=-R_{4154}=-R_{6156}=-\frac{1}{4}\lambda_3\lambda_5,\\[4pt]
    R_{2142}=R_{3143}=-R_{5145}=-R_{6146}=\frac{1}{4}\lambda_3\lambda_6,\\[4pt]
    R_{1231}=-R_{4234}=-R_{5235}=-R_{6236}=\frac{1}{4}\lambda_4\lambda_5,\\[4pt]
    R_{2162}=R_{3163}=-R_{4164}=-R_{5165}=-\frac{1}{4}\lambda_4\lambda_6,\\[4pt]
    R_{1451}=R_{2452}=R_{3453}=-R_{6456}=\frac{1}{4}\lambda_5\lambda_6.\\[4pt]
\end{array}
\end{array}
\end{equation}

\subsection{The components of $\rho$ and the value of $\tau$}
Having in mind \eqref{rho-tau}, \eqref{g^ij} and \eqref{Rijks}, we
obtain the components $\rho_{ij}=\rho(X_i,X_j)$
$(i,j=1,2,\dots,6)$ of the Ricci tensor $\rho$ and the the value
of the scalar curvature $\tau$ as follows
\begin{equation}\label{rho_ij}
\begin{array}{c}
\begin{array}{l}
    \rho_{11}=\frac{1}{2}\left(\lambda_1^2+\lambda_2^2-\lambda_3^2
                                -\lambda_4^2-\lambda_5^2-\lambda_6^2\right),\\[4pt]
    \rho_{22}=\frac{1}{2}\left(-\lambda_1^2-\lambda_2^2+\lambda_3^2
                                +\lambda_4^2-\lambda_5^2-\lambda_6^2\right),\\[4pt]
    \rho_{33}=\frac{1}{2}\left(-\lambda_1^2-\lambda_2^2-\lambda_3^2
                                -\lambda_4^2+\lambda_5^2+\lambda_6^2\right),\\[4pt]
    \rho_{44}=\frac{1}{2}\left(-\lambda_1^2-\lambda_2^2-\lambda_3^2
                                +\lambda_4^2+\lambda_5^2-\lambda_6^2\right),\\[4pt]
    \rho_{55}=\frac{1}{2}\left(\lambda_1^2-\lambda_2^2-\lambda_3^2
                                -\lambda_4^2-\lambda_5^2+\lambda_6^2\right),\\[4pt]
    \rho_{66}=\frac{1}{2}\left(-\lambda_1^2+\lambda_2^2+\lambda_3^2
                                -\lambda_4^2-\lambda_5^2-\lambda_6^2\right),\\[4pt]
\end{array}
\\[4pt]
\begin{array}{lll}
    \rho_{12}=\lambda_2\lambda_3,\qquad
    &
    \rho_{13}=\lambda_1\lambda_6,\qquad
    &
    \rho_{14}=\lambda_3\lambda_6,\\[4pt]
    \rho_{15}=-\lambda_3\lambda_5,\quad
    &
    \rho_{16}=-\lambda_4\lambda_6,\quad
    &
    \rho_{23}=\lambda_4\lambda_5;\\[4pt]
    \rho_{24}=-\lambda_2\lambda_6,\quad
    &
    \rho_{25}=\lambda_2\lambda_5,\quad
    &
    \rho_{26}=-\lambda_1\lambda_5;\\[4pt]
    \rho_{34}=-\lambda_1\lambda_3,\quad
    &
    \rho_{35}=-\lambda_2\lambda_4,\quad
    &
    \rho_{36}=\lambda_1\lambda_4;\\[4pt]
    \rho_{45}=\lambda_5\lambda_6,\quad
    &
    \rho_{46}=\lambda_3\lambda_4,\quad
    &
    \rho_{56}=\lambda_1\lambda_2;\\[4pt]
\end{array}
\end{array}
\end{equation}
\begin{equation}\label{tau}
    \tau=0,
\end{equation}
i.~e. the scalar curvature on $(G,J,g)$ is zero.

The last equation implies immediately
\begin{prop}\label{Prop:tau=0}
    The manifold $(G,J,g)$ is scalar flat.
\end{prop}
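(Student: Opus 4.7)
The plan is to derive equation \eqref{tau} directly by contracting the explicit Ricci components listed in \eqref{rho_ij} against the inverse metric \eqref{g^ij}; the proposition then follows by the definition of scalar flatness. Since \eqref{g^ij} shows that $g^{ij}$ is diagonal with $g^{11}=g^{22}=g^{33}=1$ and $g^{44}=g^{55}=g^{66}=-1$, the definition $\tau=g^{ij}\rho(e_i,e_j)$ in \eqref{rho-tau} collapses to the signed diagonal trace
$$\tau=\rho_{11}+\rho_{22}+\rho_{33}-\rho_{44}-\rho_{55}-\rho_{66}.$$

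Substituting the six diagonal Ricci components from \eqref{rho_ij} and grouping by $\lambda_k^2$, I expect a complete cancellation. For each fixed $k\in\{1,\dots,6\}$, the monomial $\lambda_k^2$ appears in each of the six rows $\rho_{11},\dots,\rho_{66}$ with coefficient $\pm\tfrac{1}{2}$; once weighted by the corresponding sign from $g^{ii}$, the positive and negative contributions split evenly three-and-three, so every $\lambda_k^2$ drops out and $\tau=0$. This realises \eqref{tau}, and scalar flatness of $(G,J,g)$ is then immediate.

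The main obstacle is nothing deeper than careful sign bookkeeping, most cleanly done by laying out the six diagonal entries $\rho_{ii}$ as a $6\times 6$ sign table in the variables $\lambda_1^2,\dots,\lambda_6^2$ and verifying that each column has signed sum zero after the $g^{ii}$ weights are applied. The off-diagonal entries $\rho_{ij}$ (with their cross-terms $\lambda_i\lambda_j$) play no role, since $g^{ij}$ has no off-diagonal part. Conceptually, the cancellation reflects the symmetric way each $\lambda_k^2$ distributes between the $J$-holomorphic and anti-holomorphic blocks of the Norden-signature metric, consistent with the $\W_3$-character of the manifold established in \thmref{Th:Kil_g}.
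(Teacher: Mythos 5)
Your proposal is correct and follows essentially the same route as the paper: the scalar curvature is obtained by contracting the Ricci components \eqref{rho_ij} with the diagonal inverse metric \eqref{g^ij} via \eqref{rho-tau}, and the signed sum $\rho_{11}+\rho_{22}+\rho_{33}-\rho_{44}-\rho_{55}-\rho_{66}$ indeed cancels term by term in each $\lambda_k^2$, giving $\tau=0$.
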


\subsection{The sectional curvatures and the holomorphic bisectional curvature}

Let us consider the characteristic 2-planes $\alpha_{ij}$ spanned
by the basis vectors $\{X_i,X_j\}$ at an arbitrary point of the
manifold:
\begin{itemize}
    \item holomorphic 2-planes: $\alpha_{14}$, $\alpha_{25}$, $\alpha_{36}$;
    \item pairs of totally real 2-planes:
    $\left(\alpha_{12}, \alpha_{45}\right)$; $\left(\alpha_{13}, \alpha_{46}\right)$;
    $\left(\alpha_{15}, \alpha_{24}\right)$; $\left(\alpha_{16}, \alpha_{34}\right)$;
    $\left(\alpha_{23}, \alpha_{56}\right)$; $\left(\alpha_{26},
    \alpha_{35}\right)$.
\end{itemize}
Then, using \eqref{k}, \eqref{g} and \eqref{Rijks}, we obtain the
corresponding sectional curvatures
\begin{equation}\label{k_ik}
\begin{array}{c}
\begin{array}{l}
k(\alpha_{14})=\frac{1}{4}\left(\lambda_1^2+\lambda_2^2-\lambda_4^2-\lambda_5^2\right),\\[4pt]
k(\alpha_{25})=-\frac{1}{4}\left(\lambda_1^2-\lambda_3^2-\lambda_4^2+\lambda_6^2\right),\\[4pt]
k(\alpha_{36})=-\frac{1}{4}\left(\lambda_2^2+\lambda_3^2-\lambda_5^2-\lambda_6^2\right);\\[4pt]
\end{array}
\; \\[4pt]
    \begin{array}{ll}
    k(\alpha_{12})=-\frac{1}{4}\left(\lambda_5^2+\lambda_6^2\right),\quad
    &
    k(\alpha_{45})=\frac{1}{4}\left(\lambda_2^2+\lambda_3^2\right),\\[4pt]
    k(\alpha_{13})=-\frac{1}{4}\left(\lambda_3^2+\lambda_4^2\right),\quad
    &
    k(\alpha_{46})=\frac{1}{4}\left(\lambda_1^2+\lambda_6^2\right),\\[4pt]
    k(\alpha_{15})=\frac{1}{4}\left(\lambda_2^2-\lambda_6^2\right),\quad
    &
    k(\alpha_{24})=\frac{1}{4}\left(\lambda_3^2-\lambda_5^2\right),\\[4pt]
    k(\alpha_{16})=\frac{1}{4}\left(\lambda_1^2-\lambda_3^2\right),\quad
    &
    k(\alpha_{34})=-\frac{1}{4}\left(\lambda_4^2-\lambda_6^2\right),\\[4pt]
    k(\alpha_{23})=-\frac{1}{4}\left(\lambda_1^2+\lambda_2^2\right),\quad
    &
    k(\alpha_{56})=\frac{1}{4}\left(\lambda_4^2+\lambda_5^2\right),\\[4pt]
    k(\alpha_{26})=-\frac{1}{4}\left(\lambda_2^2-\lambda_4^2\right),\quad
    &
    k(\alpha_{35})=-\frac{1}{4}\left(\lambda_1^2-\lambda_5^2\right).\\[4pt]
\end{array}
\end{array}
\end{equation}

Taking into account \eqref{h}, \eqref{g} and \eqref{Rijks}, we
obtain that the holomorphic bisectional curvature of the three
pairs of the basis holomorphic 2-planes vanishes, i.~e.
\begin{equation}\label{h=0}
    h(\alpha_{14},\alpha_{25})=h(\alpha_{14},\alpha_{36})=h(\alpha_{25},\alpha_{36})=0.
\end{equation}

\subsection{The isotropic-K\"ahlerian property} Having in mind Propositions \ref{Prop3.1}--\ref{Prop:iK} and \thmref{Th:Kil_g},
we give the following characteristics of the constructed manifold
\begin{thm}
    The manifold $(G,J,g)$ constructed as an $\W_3$-manifold with Killing metric in \thmref{Th:ex}:
    \begin{enumerate}
    \renewcommand{\labelenumi}{(\roman{enumi})}
    \item
    is isotropic K\"ahlerian;
    \item
    has an isotropic Nijenhuis tensor;
    \item
    is scalar flat;
    \item
    is locally symmetric.
    \end{enumerate}
\end{thm}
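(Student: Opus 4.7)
The plan is to recognize this theorem as a packaging statement: all four of its assertions have already been established, as separate propositions, in the preceding subsections of this section together with \thmref{Th:Kil_g}. The proof I would write therefore consists of four short citations and no fresh computation.

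For (i), I will invoke \propref{Prop:iK}, which records $\nJ=0$ for the manifold under consideration; that proposition was obtained by substituting the explicit components \eqref{Fijk} and the inverse-metric entries \eqref{g^ij} into the definition \eqref{snormF}, with the signature-induced sign pattern producing total cancellation. For (ii), I will invoke \propref{Prop3.1}, which gives $\nN=0$ from the explicit Nijenhuis components \eqref{Nij} together with \eqref{g^ij}. For (iii), I will quote \propref{Prop:tau=0}, which gives $\tau=0$ from \eqref{rho_ij} and \eqref{g^ij}.

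For (iv), I will apply \thmref{Th:Kil_g}(ii) as a black box rather than re-derive $\nabla R=0$. By the construction in \thmref{Th:ex}, the metric $g$ on $G$ is invariant in the sense of \eqref{inv}, hence a Killing metric on the corresponding Lie algebra; the general result of \thmref{Th:Kil_g}(ii) then delivers local symmetry directly.

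The only thing to check is essentially bookkeeping: that the manifold $(G,J,g)$ appearing in the present statement is literally the one produced by \thmref{Th:ex}, so that the Killing hypothesis of \thmref{Th:Kil_g} applies and the explicit component data used for (i)--(iii) are the correct ones for this family. Since the entire Section~\ref{sec_6dim} and the earlier subsections of the present section work with exactly the 6-parameter family of \tabref{table2}, this identification is immediate, and I do not anticipate any genuine mathematical obstacle beyond this.
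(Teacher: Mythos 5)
Your proposal matches the paper's own treatment exactly: the theorem is stated as a summary, justified by citing Propositions \ref{Prop3.1}, \ref{Prop:iK} and \ref{Prop:tau=0} for parts (i)--(iii) and Theorem \ref{Th:Kil_g}(ii) for local symmetry, with no new computation. The bookkeeping point you raise (that the manifold of Theorem \ref{Th:ex} satisfies the Killing hypothesis \eqref{inv}) is indeed the only thing to verify, and it holds by construction.
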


%%%%%%%%%%%%%%%%%%%%%%%%%%%%%%%%%%%%%%%%%%%%%%%%%%%%%%%%%%%%%%%%%%%%%%%%%%%

\end{document}